\DeclareMathOperator*{\argmin}{arg\,min}
\title{Distributed Nash Equilibrium Seeking for Constrained Aggregative Games  over Jointly Connected and Weight-Balanced Switching Networks}
\author{Zhaocong~Liu and
	Jie~Huang,~\IEEEmembership{Life Fellow,~IEEE}
			\thanks{This work was supported by the Research Grants Council of the Hong Kong Special Administration Region under grant No. 14203924.}
			\thanks{The authors are with the Department of Mechanical and Automation Engineering, The Chinese University of Hong Kong, Hong Kong
				E-mail: {\tt\small  \{zcliu,jhuang\}@mae.cuhk.edu.hk}.}
		\thanks{Corresponding Author: Jie Huang ({\tt\small  jhuang@mae.cuhk.edu.hk}).}	
}
\newtheorem{theorem}{Theorem}
\newtheorem{proposition}{Proposition}
\newtheorem{lemma}{Lemma}
\newtheorem{remark}{Remark}
\newtheorem{assumption}{Assumption}
\newcommand{\col}{\hbox{col}}
\newcommand{\EQ}{\begin{eqnarray}}
	\newcommand{\EN}{\end{eqnarray}}
\newcommand{\EQQ}{\begin{eqnarray*}}
	\newcommand{\ENN}{\end{eqnarray*}}
\begin{document}
	
	\maketitle

	\begin{abstract}
		The property of the communication network and the constraints on the strategic space are two factors that determine the complexity of the  distributed Nash equilibrium (DNE) seeking problem.
	The DNE seeking problem of aggregative games has been studied  for unconstrained case over all types of communication networks and for various types of constrained games over static and connected communication networks. 
	In this paper, we investigate the DNE seeking problem for constrained aggregative games over jointly connected and weight-balanced switching networks, which can be directed and disconnected at every time instant.  
	By integrating the projected gradient technique and the dynamic average consensus algorithm, we convert our problem to the stability problem of a well-defined time-varying nonlinear system.
	By constructing a time-varying Lyapunov's function candidate for this time-varying nonlinear system, we conduct a rigorous Lyapunov's analysis to conclude the exponential stability 
	of this system and hence solve our problem.  
	\end{abstract}

	\begin{IEEEkeywords}
		Projection operator, constrained games, Nash equilibrium, switching networks.
	\end{IEEEkeywords}

	\section{Introduction}
	The distributed Nash equilibrium (DNE) seeking problem has been extensively studied in recent years. One of the main challenges of the problem is caused by the fact that the players lack  full information about the actions of all other players. Thus, 
	they have to estimate the actions of other players over a  communication network. Therefore, the nature of the communication network dictates the complexity of the problem. 
	The simplest case occurs when the network is fixed and connected and such a case was studied in, for example, \cite{gadjov2018passivity, ye2017distributed,de2019distributed,feng2023adaptively,huang2024distributed}. 
	References  \cite{ye2017switching,bianchi2020fully,poveda2022fixed} further studied the case where the network is time-varying and every-time connected. The most challenging case is when the players exchange their information over a so-called jointly strongly connected switching network, which can be disconnected at every time instant. 
	Such a case was first studied in \cite{he2023Neurocomput} and was further pursued in several other papers such as \cite{liu2024distributed,liu2024robust}.

	Another challenge of the DNE seeking problem arises from the constraints on the strategic space of the players. 
	Most of the papers cited above studied the unconstrained case, i.e., the strategic space of the players is the whole space.  In practice, due to limitations in the allocated resources or the shortcomings of players' mechanisms, the actions of players may be subject
	to various constraints. One typical constraint is that the players' actions are restricted to certain compact sets. Such a scenario is seen in, for example, the positioning of unmanned aerial vehicles \cite{bianchi2021continuous}, Nash-Cournot games \cite{bianchi2020fully}, distributed resource allocation \cite{gadjov2018passivity},  interference and anti-interference problems \cite{liu2023dynamic}, optical networks \cite{pavel2006noncooperative}, etc. To deal with such a case, one often resorts to the projected gradient-based algorithm. 
	For example,  references \cite{gadjov2018passivity,de2019distributed,huo2024distributed,zou2021continuous} considered this case over fixed and connected communication networks. 
	Reference \cite{he2024distributed} further considered this case over jointly strongly connected switching networks under the assumption that the pseudogradient mapping vanishes at the  unique Nash equilibrium. This assumption was removed recently in \cite{dai2025distributed}.

    There is a special type of games in which the cost function of each player depends on  the player's action and an aggregative function of the actions of all players. Such a game is called aggregative game.
    Unlike the general game where each  player  knows  his/her own cost function,    in an aggregative game, since  
    the cost function of every player $i$ also depends on the aggregate function which is not fully known by the player,  when seeking  the Nash equilibrium of an aggregative game, one not only needs to enable each player to estimate the actions of others, but also 
    enable each player to estimate the  aggregate function. As a result, 
    even though an aggregative game is a special case of a general game, the DNE seeking problem of an aggregative game  presents an additional challenge. 
    The DNE seeking problem in an aggregative game has also been studied by a number of papers over fixed and connected networks in \cite{cai2022distributed,zhu2022asynchronous,deng2018distributed,koshal2016OR}   and 
    time-varying and every-time connected networks in \cite{liang2017distributed}. 
    Very recently, by  combining the  dynamic average consensus protocol and the pseudogradient update module, the DNE seeking problem of aggregative games over a jointly connected and weight-balanced network 
     was further solved in  \cite{liu2024aggregate}. Nevertheless, none of the aforementioned papers considered constrained action space and switching communication networks simultaneously.
     In this paper, we will further consider the constrained  DNE seeking problem of aggregative games over a jointly connected and weight-balanced network.  
    	The main contributions are summarized as follows:
	
	\begin{enumerate}[(1)]
	  	    \item   Compared with \cite{liu2024aggregate}, which considered the unconstrained strategic space, this paper studies the constrained case which cannot be dealt with by the approach of \cite{liu2024aggregate}.
	  	    The difficulty is overcome by applying a projection-based algorithm as will be made clear in Remark \ref{remcontri1}.

            \item  References \cite{liang2022exponentially} and \cite{zhu2020distributed}  studied the  DNE seeking problem of aggregative games over static networks. Their problem  comes down to  the stability issue of a time-invariant system.
            In contrast, our technical challenge is a stability issue of a time-varying system. We need to develop a rigorous Lyapunov approach to conclude the exponential stability of some time-varying nonlinear system. 
            More detailed comparison will be given in   Remark \ref{remcontri2}.
	\end{enumerate}  
	
The new result is obtained by an integration of a projection-based algorithm and the average consensus protocol which is able to estimate the actions of all players and the aggregative function over some jointly connected and weight-balanced network.   
It should be noted that the DNE seeking problem of  aggregative games over  switching networks has also been studied by  
the discrete-time algorithms in \cite{belgioioso2020distributed,liu2024timestamp,liu2023online} and the hybrid dynamic system-based algorithm in \cite{wang2023distributed}.  These approaches are quite different from ours and do not apply to our problem.

	   The rest of the paper is organized as follows.
	   Section \ref{sec:2} provides the preliminaries.
	   Section \ref{sec:3} presents the proposed algorithm and the convergence proof.
	   The conclusion is summarized in Section \ref{sec:5}.

	\indent\textbf{Notations}:
$\mathbb{R}_+$ and $\mathbb{Z}_+$ denote the set of positive numbers and positive integers, respectively.
For vector $x$ or matrix $A$, $\|x\|$ denote the Euclidean norm of $x$ and $\|A\|$ denote the Euclidean-induced matrix norm of $A$.
For column vectors $a_i$,  $\mathrm{col} (a_1,\cdots,a_n)= [a_1^{\rm T},\cdots,a_n^{\rm T}]^{\rm T}$.
$\otimes$ denotes the Kronecker product.
$\bm{1}_p$ is the $p$-dimensional column vector  with all $1$'s, $\bm{0}_{p\times q}$ is the $p\times q$-dimensional matrix with all $0$'s, and $I_p$ is the $p$-dimensional identity matrix.

	\section{Preliminaries}\label{sec:2}


	\subsection{Game theory}
	A non-cooperative  game is defined by a triplet $\Gamma \overset{\Delta}{=} (\mathcal{V},f_i,U_i)$. Here,  $\mathcal{V}=\{1,\cdots,N\}$ is the set of $N$ players, $U_i\subseteq \mathbb{R}^{n}$ is the action space for player $i$.
	Let $\mathrm{U}=\Pi_{i=1}^NU_i\subseteq\mathbb{R}^{Nn}$ be the strategy space,  $\bm{x}=\mathrm{col}(x_1,\cdots,x_N)\in\mathrm{U}$ be the strategy vector with $x_i\in U_i$ representing player $i$'s action, and  define $\bm{x}_{-i}=\col(x_1,\cdots,x_{i-1},x_{i+1},\cdots,x_N)\in\mathbb{R}^{Nn-n}$. 
	A Nash equilibrium denoted by   $\mathrm{x^*}=\col(\mathrm{x}_i^*,\mathbf{x}_{-i}^*)\in\mathrm{U}$  is such that 
	\begin{align}\label{def 4 NE}
		f_i(\mathrm{x}_i^*,\mathbf{x}_{-i}^*) \leq f_i(\mathrm{x}_i,\mathbf{x}_{-i}^*),~~ \forall i \in \mathcal{V}, \hspace{1mm} \forall \mathrm{x}_i \in U_i.
	\end{align}
	Let $\nabla_if_i(x_i,\bm{x}_{-i})=(\frac{\partial f_i(x_i,\bm{x}_{-i})}{\partial x_i})^{\rm T}\in\mathbb{R}^n$. Then, we call
	\begin{align}\label{psudo-gradient}
		F(\bm{x})= \mathrm{\col} \left( \nabla_1 f_1 (x_1,\bm{x}_{-1}), \cdots, \nabla_N f_N (x_N,\bm{x}_{-N})  \right)
	\end{align}
	the  \emph{pseudo-gradient} operator of the game.

	The following two  assumptions are standard. 
	\begin{assumption}\label{assump: convex fi}
	For all $i \in \mathcal{V}$,  i) $U_i$ is nonempty, closed and convex; ii) the cost function $f_i (x_i, \bm{x}_{-i})$  is convex and continuously differentiable in  ${x}_i$ for every fixed $\bm{x}_{-i} \in U_{-i}$; iii) $F$ in \eqref{psudo-gradient} is $\mu$-strongly monotone on $\mathrm{U}$,
	i.e., for some  $\mu >0$,
	\begin{equation}
				(\bm{x}-\bm{x'})^{\rm T}(F(\bm{x})-F(\bm{x'})) \geq \mu \|\bm{x}-\bm{x'}\|^2 \label{assump: F strong monotone}  
				\end{equation}	
	\end{assumption}

	\begin{assumption}\label{assump: F monotone & theta Lipschitz Globally}
	$F$ in \eqref{psudo-gradient} is   $\theta$-Lipschitz continuous on $\mathrm{U}$, i.e., 
	\begin{align}
	\|F(\bm{x}) - F(\bm{x'})\| \leq \theta\|\bm{x}-\bm{x'}\|, \hspace{2mm} \forall \bm{x},\bm{x'} \in \mathrm{U}   \label{assump: F Lipschitz continuous}	
\end{align}		
\end{assumption}

      \begin{remark}
      By \cite[Prop.~1.4.2]{facchinei2003finite}, under parts (i) and (ii) of Assumption \ref{assump: convex fi},  a pure Nash equilibrium $\mathrm{x^*} \in \mathrm{U}$ exists, which  satisfies the following variational inequality:
      	\begin{align}\label{def 4 variational ineq}
      	(\bm{x}-\mathrm{x^*})^{\rm{T}}F(\mathrm{x^*})\geq 0, ~~ \forall \bm{x}\in\mathrm{U}
      \end{align}
      	
   	Further, by \cite[Thm.~2.3.3]{facchinei2003finite}, under Assumption \ref{assump: convex fi}, a unique NE point $\mathrm{x^*} \in \mathrm{U}$ exists.
     In the special case where $\mathrm{U} =	\mathbb{R}^{Nn}$,  condition \eqref{def 4 variational ineq} reduces to the following 
     \begin{align}\label{def 4 variational eq}
     F(\mathrm{x^*}) = \bm{0}_{(Nn)\times 1}
     \end{align}
 \end{remark}

    To introduce an aggregative game,  let an aggregate function $\sigma(\bm{x})$ be defined as follows:
    \begin{align}\label{def sigma}
    	\sigma(\boldsymbol{x})  \overset{\Delta}{=} \frac{1}{N}\sum_{i=1}^{N} \phi_i(x_i)
    \end{align}
    where $\phi_i(\cdot): \mathbb{R}^{n}\mapsto \mathbb{R}^n$ is a private function known to player $i$. 
    An aggregative game is a game whose cost functions $f_i(x_i,\bm{x}_{-i})$ satisfy $f_i(x_i,\bm{x}_{-i}) = \bar{f}_i(x_i,\sigma(\bm{x}))$ for some functions $\bar{f}_i$. 
    Even though an aggregative game is a special case of a general game, the seeking of the NE over a communication network presents some specific challenge since, 
    unlike the general game where  player $i$  is  aware of his/her own cost function $f_i(x_i,\bm{x}_{-i})$ in \cite{he2023Neurocomput, he2024distributed}  while player $i$ in an aggregative game lacks some information about his/her cost function $\bar{f}_i(x_i, {\sigma}(\bm{x}))$ due to the presence of the unknown functions $\phi_j (j\neq i)$. As a result, in seeking  the Nash equilibrium of an aggregative game over a communication network, one also needs to develop a technique to estimate ${\sigma}(\bm{x})$. 
     To overcome this challenge, we need one more assumption. 
    For this purpose, let $\bm{s} = \col(s_1,s_2,\cdots,s_N)\in \mathbb{R}^{Nn}$ with $s_i\in\mathbb{R}^n$ and $\phi(\bm{x}) \!=\! \col(\phi_1(x_1),\phi_2(x_2),\cdots\!,\phi_N(x_N))\!\in\!\mathbb{R}^{Nn}$.
   We call the following \emph{extended pseudo-gradient} operator
   \begin{align}\label{mathbf F def}
   	\mathbf{F}(\boldsymbol{x},\boldsymbol{s}) = \mathrm{col}(J_1(x_1,s_1),\cdots,J_N(x_N,s_N))
   \end{align}
   where 
   \begin{align}\label{parial gradient Ji Def}
   	J_i(x_i,s_i) &\overset{\Delta}{=} \nabla_y \bar{f}_i(y,s_i)|_{y=x_i} \notag \\
   	&\hspace{5mm} + \frac{1}{N} \nabla\phi_i(x_i)\nabla_y \bar{f}_i(x_i,y)|_{y=s_i}
   \end{align}
   From $f_i(x_i,\bm{x}_{-i}) = \bar{f}_i(x_i,\sigma(\bm{x}))$, and
   Eqs.~\eqref{psudo-gradient}, \eqref{mathbf F def}, \eqref{parial gradient Ji Def}, one has 
   \begin{align}\label{mathbf F def2}
   	\mathbf{F}(\bm{x}, 1_N \otimes\sigma(\bm{x})) = F(\bm{x})
   \end{align}

   \begin{assumption}\label{assump: F 2nd Lipsch & phi bounded}
   	    ~~
    	\begin{enumerate}[1)]
   		\item   $\mathbf{F}$ in \eqref{mathbf F def} is Lipschitz continuous in the second argument, i.e., $\|\mathbf{F}(\bm{x},\bm{s}) - \mathbf{F}(\bm{x},\bm{s'})\|\leq \hat{\theta}\|\bm{s}-\bm{s'}\|, \forall \bm{s},\bm{s'}\in\mathbb{R}^{Nn}$ for some $\hat{\theta}>0$. 
   		
   		\item The Jacobian of $\phi(\bm{x})$ satisfies  $\|\frac{\partial \phi(\bm{x})}{\partial \bm{x}}\|\leq l$ for some $l>0$.
   	\end{enumerate}
   \end{assumption}

     \begin{remark}
     	Part $1)$ of Assumption~\ref{assump: F 2nd Lipsch & phi bounded} is standard and has been used in many literature on aggregative games, see  \cite[Assump.~5]{belgioioso2020distributed}\cite[Assump.~4]{gadjov2020single}\cite[Assump.~3.1)]{liu2024aggregate}\cite[Assump.~3]{zhu2020distributed}\cite[Assump.~3]{zhu2022asynchronous}.
     	Part $2)$ of Assumption~\ref{assump: F 2nd Lipsch & phi bounded} includes the average aggregate function $\phi_i(x_i) = x_i$ in \cite{shakarami2022distributed} and the matrix weighted aggregate function $\phi_i(x_i) = A_ix_i$ in \cite{bianchi2021continuous} as special cases.
     \end{remark}

	\subsection{Graph theory}	
	A time-varying graph is denoted by $\mathcal{G}(t) = (\mathcal{V},\mathcal{E}(t))$, where
	$\mathcal{V} = \{1,\cdots,N\}$ is the node set corresponding to the $N$ players, and $\mathcal{E}(t)\subseteq \mathcal{V}\times \mathcal{V}$ is the edge set. We denote $(j,i)\in \mathcal{E}(t)$ if node $i$ can receive information from node $j$.	
	A directed path from node $i_1$ to node $i_k$ at time $t$ is denoted by $\{(i_1,i_2),\cdots,(i_{k-1},i_k)\}\subseteq \mathcal{E}(t)$.
	The graph $\mathcal{G}(t)$ is said to be \emph{connected} at time $t$ if one node has directed paths to every other node at time $t$, and is said to be \emph{strongly connected} at time $t$ if there is a directed path between any two nodes at time $t$.
	
	Define a piece-wise
	constant switching function $\rho: [0,+\infty) \mapsto \mathcal{P}=\{1,\cdots,n_0\}$ with
	$n_0\in\mathbb{Z}_+$.
	Let $\{t_j:j=0,1,2,\cdots,\}$ be a sequence satisfying $t_0 = 0, t_{j+1}-t_j\geq\tau$ for some constant $\tau>0$ and for all $t\in [t_j,t_{j+1}), \rho(t) = p$ for some $p\in \mathcal{P}$.
	Then, $\mathcal{P}$ is called the switching index set, $t_j$ is called the switching instant, and $\tau$ is called the dwell time.
	
	 Given a set of $n_0$ graphs $\{\mathcal{G}_i = (\mathcal{V}, \mathcal{E}_i), i=1,\cdots,n_0\}$, one can build a time varying graph $\mathcal{G}_{\rho(t)}=(\mathcal{V},\mathcal{E}_{\rho(t)})$ via a piece-wise constant switching signal $\rho(t)$ with range $\mathcal{P}=\{1,\cdots,n_0\}$. We call $\mathcal{G}_{\rho(t)} = (\mathcal{V}, \mathcal{E}_{\sigma(t)})$ a \emph{switching graph} or a \emph{switching network}.	Denote the weighted adjacency matrix of  $\mathcal{G}_{\rho(t)}$ by $\mathcal{A}_{\rho(t)} = [a_{ij}(t)]\in \mathbb{R}^{N\times N}$ where  $a_{ij}(t)>0$ if $(j,i)\in\mathcal{E}_{\rho(t)}$ and $a_{ij}(t)=0$  otherwise.
	 Since there exists no such edge as $(i,i)$, we have $a_{ii}(t)=0$.
	 The in-degree of node $i$ is defined as $d_i^{in} = \sum_{j=1}^N a_{ij}(t)$. Let $D(t) = \mathrm{diag}(d_1^{in},\cdots,d_N^{in})$. The matrix $\mathcal{L}_{\rho(t)} = D(t) - \mathcal{A}_{\rho(t)}$ is called the Laplacian matrix of $\mathcal{G}_{\rho(t)}$.
	 For any $t\geq 0, s>0$, let $\mathcal{G}_{\rho([t,t+s))} = \cup_{t_i\in[t,t+s)} \mathcal{G}_{\rho(t_i)}$.
	 We call $\mathcal{G}_{\rho([t,t+s))}$ the \emph{union graph} of $\mathcal{G}_{\rho(t)}$ over time interval $[t,t+s)$.
	$\mathcal{G}_{\rho(t)}$ is called weight-balanced at time $t$ if $\sum_{j=1}^N a_{ij}(t) = \sum_{j=1}^N a_{ji}(t)$ holds for all $i\in\mathcal{V}$.

	\begin{assumption}\label{assump: jointly connect & weight-balance}
		~~~
		\begin{enumerate}[1)]
			\item  There exists a  positive number $T$ such that the  graph $\mathcal{G}_{\rho([t,t+T))}$ is connected for all $t \geq 0$.
			
			\item  The graph $\mathcal{G}_{\rho(t)}$ is weight-balanced for any $t\geq 0$.
		\end{enumerate}    	
	\end{assumption}
	        \begin{remark}
	        	For convenience, we say a switching graph $\mathcal{G}_{\rho(t)}$ satisfying Assumption~\ref{assump: jointly connect & weight-balance} is jointly connected and weight-balanced. Under Assumption~\ref{assump: jointly connect & weight-balance}, $\mathcal{G}_{\rho(t)}$ can be disconnected for any time, thus is the mildest one in existing literature of distributed Nash equilibrium seeking for aggregative games~\cite{liu2024aggregate}.
			\end{remark}

	\section{Main Result}\label{sec:3}
	
	For a game satisfying  Assumptions~\ref{assump: convex fi} and \ref{assump: F monotone & theta Lipschitz Globally},  by viewing the action  variables  to be governed by the following first-order integrator dynamics
	\begin{align}\label{def 4 single integra}
		\dot{x}_i(t) = u_i(t), \hspace{1mm} i\in\mathcal{V}
	\end{align}
we can treat our problem  to that of finding a distributed control protocol $u_i$ such that the solution of the closed-loop system converges to the Nash equilibrium. 
	The  special case of our problem where $\mathrm{U} =	\mathbb{R}^{Nn}$ was studied in \cite{liu2024aggregate}.  But, when  $\mathrm{U}$ is a compact set, the approach in \cite{liu2024aggregate} does not work since 
	the control protocol in \cite{liu2024aggregate} cannot guarantee the action variables $x_i (t)$ belongs to  $\mathrm{U}_i$ even if $x_i (0) \in \mathrm{U}_i$.
The standard way for dealing with this difficulty is to introduce the projection operator. 
	Let $\Omega\!\subset\!\mathbb{R}^n$ be a closed convex set.
	For a vector $x\in\mathbb{R}^n$, the (Euclidean) projection operator $\mathbf{P}_{\Omega}(\cdot)$ is defined as $\mathbf{P}_{\Omega}(x)\overset{\Delta}{=}\argmin_{x'\in\Omega}\|x'-x\|^2$, which is to find a unique vector $x'\in\Omega$ that is closest to $x$ in the Euclidean norm. By \cite[Theorem~1.5.5 (d)]{facchinei2003finite}, $\mathbf{P}_{\Omega}(\cdot)$ is non-expansive, i.e., 
	\begin{align}\label{nonexpansive of Eucli project}
		\|\mathbf{P}_\Omega(x)-\mathbf{P}_\Omega(y)\|\leq\|x-y\|, \hspace{2mm} \forall x,y\in\mathbb{R}^n
	\end{align}	

   We now define our control protocol  for each player $i$ as follows
	\begin{subequations}\label{ctrllaw 4 aggregate}
		\begin{align}
			u_i &= \delta_2\big(\mathbf{P}_{U_i}(x_i-\delta_1J_i(x_i, s_i))-x_i\big) \label{ctrllaw 4 aggregate xi} \\
			\dot{s}_i &= -\alpha(s_i-\phi_i(x_i)) - \beta\sum_{j=1}^N a_{ij}(t)(s_i-s_j) - v_i \label{ctrllaw 4 aggregate si}\\
			\dot{v}_i &= \alpha\beta\sum_{j=1}^N  a_{ij}(t)(s_i-s_j), \hspace{2mm} \sum_{j=1}^N v_j(0)=\bm{0}_{n\times 1}  \label{ctrllaw 4 aggregate vi}
		\end{align}
	\end{subequations}  
	where $s_i\in\mathbb{R}^n,v_i\in\mathbb{R}^n$ are two internal variables, $\delta_1, \delta_2, \alpha, \beta$ are adjustable constant parameters to be specified later. The zero sum initial condition indicated in \eqref{ctrllaw 4 aggregate vi} will be utilized in  Proposition~\ref{proposition: alternate sys} later.

	Let $\bm{x}=\col(x_1,\cdots\!,x_N)\!\!\in\!\!\mathbb{R}^{Nn}, \bm{s}=\col(s_1,\cdots\!,s_N)\!\!\in\!\!\mathbb{R}^{Nn}, \bm{v}=\col(v_1,\cdots\!,v_N)\!\!\in\!\!\mathbb{R}^{Nn}$. Then, the compact form of Eqs. \eqref{def 4 single integra} and \eqref{ctrllaw 4 aggregate xi}-\eqref{ctrllaw 4 aggregate vi} is as follows
	\begin{subequations}\label{compact 4 aggregate}
		\begin{align}
			\dot{\bm{x}} &= \delta_2\big(\mathbf{P}_{\mathrm{U}}(\bm{x}-\delta_1\mathbf{F}(\bm{x},\bm{s}))-\bm{x}\big)   \label{compact 4 aggregate x} \\
		\dot{\bm{s}} &= -\alpha(\bm{s}-\phi(\bm{x})) - \beta\mathbf{L}_{\rho(t)}\bm{s} - \bm{v}   \label{compact 4 aggregate s} \\
		\dot{\bm{v}} &= \alpha\beta\mathbf{L}_{\rho(t)}\bm{s}, \hspace{2mm} \sum_{j=1}^N v_j(0)=\bm{0}_{n\times 1} \label{compact 4 aggregate v}
		\end{align}
	\end{subequations}
	where $\mathbf{L}_{\rho(t)} := \mathcal{L}_{\rho(t)}\otimes I_n$.

	 	\begin{remark} \label{remcontri1}
	 	As explained in \cite{liu2024aggregate}, $J_i(x_i,s_i)$ is used to estimate  the pseudo-gradient $F$ defined in \eqref{psudo-gradient}, $s_i \in \mathbb{R}^n$ is used to estimate  the unknown aggregate function $\sigma(\bm{x})$, and $v_i \in \mathbb{R}^n$ is employed to compensate the mismatch between the $N$ local functions $\phi_i(x_i)$ and the aggregate function $\sigma(\bm{x})$ at steady state.	
	 	The R.H.S. of  \eqref{compact 4 aggregate x}	is modified from \cite[Eq.~(11a)]{dai2025distributed},  which studied the DNE seeking for general games.  The non-expansive condition \eqref{nonexpansive of Eucli project} together with Assumptions~\ref{assump: F monotone & theta Lipschitz Globally} and \ref{assump: F 2nd Lipsch & phi bounded} guarantee that  the R.H.S. of \eqref{compact 4 aggregate} is Lipschitz in its all arguments, thus for any initial condition, the solution of \eqref{compact 4 aggregate} exists and is unique.  Moreover, 
	 	since $\mathbf{P}_{\mathrm{U}}(\bm{x}-\delta_1\mathbf{F}(\bm{x},\bm{s}))\in\mathrm{U}$, $\dot{\bm{x}}\in\mathbb{T}_{\mathrm{U}}(\bm{x})$ for any $\bm{x}\in\mathrm{U}$ where 
	 	$ \mathbb{T}_{\mathrm{U}}(\bm{x})$ is the  tangent cone of $\mathrm{U}$ at $\bm{x}$,  
	 	by Nagumo's theorem in \cite[pp.~174 \& 214]{aubin1984differential},  $\bm{x}(t)\in \mathrm{U}, \forall t\geq 0$ if  $x_i(0)\in U_i$ for all $i\in\mathcal{V}$ in \eqref{compact 4 aggregate x}.
	 	If we replace \eqref{ctrllaw 4 aggregate xi} by $u_i = -\delta_1  J_i(x_i, s_i)$, then \eqref{ctrllaw 4 aggregate} reduces to the one used in \cite{liu2024aggregate}. However, in this case, 
	 	even if   $x_i(0)\in U_i$ for all $i\in\mathcal{V}$, there is no guarantee that $\bm{x}(t)\in \mathrm{U}, \forall t\geq 0$. Thus, the control law in \cite{liu2024aggregate} does not apply to the constrained case. 

	 \end{remark}

By  \cite[Prop.~1.5.8]{facchinei2003finite} or \cite[Lem.~2.38]{ruszczynski2011nonlinear}, $\mathrm{x^*}$ is an NE if and only if
\begin{align}\label{NE x* equality}
	\mathrm{x^*} = \mathbf{P}_\mathrm{U}(\mathrm{x^*}-kF(\mathrm{x^*})), \hspace{2mm} \forall k>0
\end{align}
Using \eqref{NE x* equality} shows that $(\mathrm{x^*}, 1_N\otimes\sigma(\mathrm{x^*}), \alpha(I_{Nn}-\frac{\bm{1}_N\bm{1}_N^{\rm T}}{N}\otimes I_n)\phi(\mathrm{x^*}) )$ is an equilibrium of \eqref{compact 4 aggregate}. Thus, if 
the solution of \eqref{compact 4 aggregate} converges to this equilibrium, then the NE is obtained.

	To facilitate subsequent convergence analysis of \eqref{compact 4 aggregate},  like in \cite{liu2024aggregate},
	let $Q = [r, R]\in\mathbb{R}^{N\times N}$ be an orthogonal matrix with $r = \frac{\bm{1}_N}{\sqrt{N}}\in\mathbb{R}^N$ and  
	$R\in\mathbb{R}^{N\times (N-1)}$. 
	One can verify that $r^TR=\bm{0}_{1\times(N-1)}$ and $R^TR=I_{N-1}$.
	Also, let  $\mathrm{P}_n = \mathbf{rr}^{\rm T} = \frac{\bm{1}_N \bm{1}_N^{\rm T}}{N} \otimes I_n, \mathrm{P}_{n}^\perp =I_{Nn}-\mathbf{rr}^{\rm T}=I_{Nn} - \frac{\bm{1}_N \bm{1}_N^{\rm T}}{N} \otimes I_{n}$, and
	\begin{align}\label{shorthand rR otimes}
		\mathbf{Q}  = Q\otimes I_n = [r\otimes I_n, R\otimes I_n]=[\mathbf{r}, \mathbf{R}]
	\end{align}
	The matrices $\mathrm{P}_n$ and $\mathrm{P}_{n}^\perp$  represent the operations of projecting a vector onto the $n$-dimensional consensus and dispersion spaces, respectively.

	Performing the following coordinate transformation on  \eqref{compact 4 aggregate s} and \eqref{compact 4 aggregate v}
		\begin{subequations}\label{error Def 4 bar-s-v}
		\begin{align}
			\bar{\bm{s}} &= \bm{s}- \mathrm{P}_n\phi(\bm{x})   \label{error Def 4 bar-s-v 1} \\
			\bar{\bm{v}} &=  \bm{v}- \alpha \mathrm{P}_n^\perp\phi(\bm{x}) \label{error Def 4 bar-s-v 2}
		\end{align}
	\end{subequations}
 and utilizing the identity $\mathbf{L}_{\rho(t)}(\bm{1}_N \otimes \sigma(\bm{x}))=(\mathcal{L}_{\rho(t)}\bm{1}_N)\otimes\sigma(\bm{x})=\bm{0}$ for all $t\geq 0$ gives  the following system
	\begin{subequations}\label{error system 4 bar-s-v}
		\begin{align}
			\dot{\bar{\bm s}} &= -\alpha\bar{\bm{s}} - \beta\mathbf{L}_{\rho(t)}\bar{\bm{s}} - \bar{\bm{v}}  - \mathrm{P}_n\frac{\partial \phi(\bm{x})}{\partial \bm{x}}\dot{\bm{x}}  \label{error system 4 bar-s-v 1} \\
			\dot{\bar{\bm{v}}} &= \alpha\beta\mathbf{L}_{\rho(t)}\bar{\bm{s}}  - \alpha \mathrm{P}_n^\perp\frac{\partial \phi(\bm{x})}{\partial \bm{x}}\dot{\bm{x}}  \label{error system 4 bar-s-v 2}
		\end{align}
	\end{subequations}
Further, let   
	\begin{subequations}\label{error Def 4 es-ev}
		\begin{align}
			\bm{e}_s &=  \mathbf{Q}^{\rm T}\bar{\bm{s}}  = \begin{bmatrix}
				\mathbf{r}^{\rm T}\\ \mathbf{R}^{\rm T}
			\end{bmatrix}\bar{\bm{s}} = \begin{bmatrix}
				\bm{e}_{s1}\\\bm{e}_{s2}
			\end{bmatrix} \label{error Def 4 es-ev 1} \\
			\bm{e}_v &=    \mathbf{Q}^{\rm T}\bar{\bm{v}}  = \begin{bmatrix}
				\mathbf{r}^{\rm T}\\ \mathbf{R}^{\rm T}
			\end{bmatrix}\bar{\bm{v}} = \begin{bmatrix}
				\bm{e}_{v1}\\\bm{e}_{v2}
			\end{bmatrix}   \label{error Def 4 es-ev 2}
		\end{align}
	\end{subequations}  
	where  $\bm{e}_{s1},\bm{e}_{v1}\in\mathbb{R}^{n}$ and $\bm{e}_{s2},\bm{e}_{v2}\in\mathbb{R}^{Nn-n}$.
	By \eqref{error Def 4 es-ev} and the weight-balanced condition in Assumption~\ref{assump: jointly connect & weight-balance}, we can put Eqs.~\eqref{error system 4 bar-s-v} into the following  form
	\begin{subequations}\label{error system 4 es-ev}
		\begin{align}
			\dot{\bm{e}}_{s1} &= -\alpha\bm{e}_{s1} - \bm{e}_{v1} - \mathbf{r}^{\rm T}\frac{\partial \phi(\bm{x})}{\partial \bm{x}}\dot{\bm{x}}  \label{error system 4 es1} \\
			\dot{\bm{e}}_{s2} &= -\alpha\bm{e}_{s2} - \beta\mathbf{R}\!^{\rm T}\mathbf{L}_{\rho(t)}\mathbf{R}\bm{e}_{s2} - \bm{e}_{v2}   \label{error system 4 es2} \\
			\dot{\bm{e}}_{v1} &= \bm{0}_{n\times 1}  \label{error system 4 ev1} \\
			\dot{\bm{e}}_{v2} &= \alpha\beta\mathbf{R}\!^{\rm T}\mathbf{L}_{\rho(t)}\mathbf{R}\bm{e}_{s2} - \alpha \mathbf{R}^{\rm T}\frac{\partial \phi(\bm{x})}{\partial \bm{x}}\dot{\bm{x}} \label{error system 4 ev2}
		\end{align}
	\end{subequations}
	where we have utilized identities $\mathbf{r}^{\rm T}\mathbf{L}_{\rho(t)}=\frac{1}{\sqrt{N}}(\bm{1}_N^{\rm T}\mathcal{L}_{\rho(t)})\otimes I_n=\bm{0}_{n\times(Nn)}, \mathbf{r}^{\rm T}\mathrm{P}_n=\mathbf{r}^{\rm T}, \mathbf{r}^{\rm T}\mathrm{P}^\perp_n=\bm{0}_{n\times(Nn)}, \mathbf{R}^{\rm T}\mathrm{P}_n=\bm{0}_{(Nn-n)\times Nn}, \mathbf{R}^{\rm T}\mathrm{P}_n^\perp=\mathbf{R}^{\rm T}$.

The following proposition simplifies our problem to the exponential stability of system  \eqref{alternate compact 4 aggregate}.

	\begin{proposition}\label{proposition: alternate sys}
	Consider the following system:
		\begin{subequations}\label{alternate compact 4 aggregate}
			\begin{align}
				\dot{\bar{\bm{x}}} &= g_0(\bm{x},\bm{s})   \label{alternate compact 4 aggregate x}  \\
				\dot{\bm{e}}_{s1} &= -\alpha\bm{e}_{s1}  - \mathbf{r}^{\rm T}\frac{\partial \phi(\bm{x})}{\partial \bm{x}}g_0(\bm{x},\bm{s})  \label{alternate compact 4 aggregate es1} \\
				\dot{\bm{e}}_{s2} &= -\alpha\bm{e}_{s2} - \beta\mathbf{R}\!^{\rm T}\mathbf{L}_{\rho(t)}\mathbf{R}\bm{e}_{s2} - \bm{e}_{v2}   \label{alternate compact 4 aggregate es2} \\
				\dot{\bm{e}}_{v2} &= \alpha\beta\mathbf{R}\!^{\rm T}\mathbf{L}_{\rho(t)}\mathbf{R}\bm{e}_{s2} - \alpha \mathbf{R}^{\rm T}\frac{\partial \phi(\bm{x})}{\partial \bm{x}}g_0(\bm{x},\bm{s})  \label{alternate compact 4 aggregate ev2}
			\end{align}
		\end{subequations}
		   where $\bar{\bm{x}} = \bm{x} - \mathrm{x^*}$ and 
		   \begin{align}\label{def 4 g0}
		   	    g_0(\bm{x},\bm{s}) = \delta_2\big(\mathbf{P}_{\mathrm{U}}(\bm{x}-\delta_1\mathbf{F}(\bm{x},\bm{s}))-\bm{x}\big)
		   \end{align}
	\end{proposition}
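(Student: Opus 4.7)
The plan is to show that system \eqref{alternate compact 4 aggregate} is exactly the reduction of the coordinate-transformed system \eqref{error system 4 es-ev} (together with the $\bm{x}$-dynamics \eqref{compact 4 aggregate x}), once the invariant $\bm{e}_{v1}(t)\equiv\bm{0}$ is used to prune both the $-\bm{e}_{v1}$ term appearing in \eqref{error system 4 es1} and the trivial equation \eqref{error system 4 ev1}. With this equivalence established, exponential stability of the origin of \eqref{alternate compact 4 aggregate} will be equivalent to convergence of the closed loop \eqref{compact 4 aggregate} to its target equilibrium, and hence to $\bm{x}(t)\to\mathrm{x}^*$.

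First I would establish the invariance $\bm{e}_{v1}(t)\equiv\bm{0}$. Equation \eqref{error system 4 ev1} already delivers $\dot{\bm{e}}_{v1}=\bm{0}$; this identity rests on the weight-balanced property, which gives $\mathbf{r}^{\rm T}\mathbf{L}_{\rho(t)}=\bm{0}$, combined with the orthogonality $\mathbf{r}^{\rm T}\mathrm{P}_n^\perp=\bm{0}$ that both flow from $\mathbf{r}^{\rm T}\mathrm{P}_n=\mathbf{r}^{\rm T}$. Hence $\bm{e}_{v1}(t)=\bm{e}_{v1}(0)$. To evaluate the initial value, expand
\begin{equation*}
\bm{e}_{v1}=\mathbf{r}^{\rm T}\bar{\bm{v}}=\mathbf{r}^{\rm T}\bm{v}-\alpha\mathbf{r}^{\rm T}\mathrm{P}_n^\perp\phi(\bm{x})=\tfrac{1}{\sqrt{N}}\textstyle\sum_{j=1}^N v_j,
\end{equation*}
so that the zero-sum initial condition $\sum_j v_j(0)=\bm{0}_{n\times 1}$ enforced in \eqref{compact 4 aggregate v} yields $\bm{e}_{v1}(0)=\bm{0}$, and therefore $\bm{e}_{v1}(t)=\bm{0}$ for all $t\geq 0$.

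Next, plugging $\bm{e}_{v1}\equiv\bm{0}$ into \eqref{error system 4 es1} produces \eqref{alternate compact 4 aggregate es1}, and \eqref{error system 4 ev1} is then discardable. The equations \eqref{error system 4 es2}--\eqref{error system 4 ev2} coincide verbatim with \eqref{alternate compact 4 aggregate es2}--\eqref{alternate compact 4 aggregate ev2}. For the $\bm{x}$-channel, setting $\bar{\bm{x}}=\bm{x}-\mathrm{x}^*$ gives $\dot{\bar{\bm{x}}}=\dot{\bm{x}}=g_0(\bm{x},\bm{s})$ since $\mathrm{x}^*$ is constant, which is \eqref{alternate compact 4 aggregate x}.

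The one point I would check carefully, which is where the equivalence could conceivably fail, is that the origin of \eqref{alternate compact 4 aggregate} really does correspond to the desired equilibrium of \eqref{compact 4 aggregate}. Inverting the coordinate changes \eqref{error Def 4 bar-s-v} and \eqref{error Def 4 es-ev}, together with $\bm{e}_{v1}\equiv\bm{0}$, $\bar{\bm{x}}=\bm{0}$, $\bm{e}_s=\bm{0}$ and $\bm{e}_{v2}=\bm{0}$ recovers $(\bm{x},\bm{s},\bm{v})=(\mathrm{x}^*,\bm{1}_N\otimes\sigma(\mathrm{x}^*),\alpha\mathrm{P}_n^\perp\phi(\mathrm{x}^*))$, while the fixed-point characterization \eqref{NE x* equality} ensures $g_0(\mathrm{x}^*,\bm{1}_N\otimes\sigma(\mathrm{x}^*))=\bm{0}$. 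Consequently the stability problem for \eqref{compact 4 aggregate} at this equilibrium is genuinely reduced to the stability problem for \eqref{alternate compact 4 aggregate} at the origin, which is the core content of the proposition.
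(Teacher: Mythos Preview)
Your proposal is correct and follows essentially the same approach as the paper: both hinge on showing $\bm{e}_{v1}(t)\equiv\bm{0}$ via the weight-balanced property and the zero-sum initialization, and then inverting the coordinate changes \eqref{error Def 4 bar-s-v}--\eqref{error Def 4 es-ev} to conclude that exponential convergence of $(\bar{\bm{x}},\bm{e}_s,\bm{e}_{v2})$ to the origin yields exponential convergence of $(\bm{x},\bm{s},\bm{v})$ to $(\mathrm{x}^*,\mathrm{P}_n\phi(\mathrm{x}^*),\alpha\mathrm{P}_n^\perp\phi(\mathrm{x}^*))$. The paper's proof is slightly terser, taking the derivation of \eqref{alternate compact 4 aggregate} from \eqref{error system 4 es-ev} as already done in the preceding text and focusing only on the inversion step, whereas you spell out both directions; but the substance is identical.
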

		Under part $2)$ of Assumptions \ref{assump: jointly connect & weight-balance},   if  system \eqref{alternate compact 4 aggregate} is exponentially stable with its domain of attraction containing any initial  $x_i(0)\in U_i$, any $\bm{e}_{s} (0)$ and any $\bm{e}_{v2} 	(0)$,	 then, for any initial condition $x_i(0)\in U_i$, any  	$s_i(0)\in \mathbb{R}^n$, and $v_i(0) \in \mathbb{R}^n$ such that $\sum_{i=1}^N v_i(0)=\bm{0}_{n\times 1}$,
 the solution of \eqref{compact 4 aggregate} exponentially converges to the following 
\begin{subequations}\label{prop 4 respective final convergence}
	\begin{align}
			&\lim_{t\to +\infty} \bm{x}(t) = \mathrm{x^*} \\
			&\lim\limits_{t\to +\infty} \bm{s}(t) =  \mathrm{P}_n\phi(\mathrm{x^*}) \\
		&\lim\limits_{t\to +\infty} \bm{v}(t) =  \alpha\mathrm{P}_n^\perp\phi(\mathrm{x^*})
	\end{align}
\end{subequations}

		\begin{proof}
Since $\lim\limits_{t\to +\infty} \!\bar{\bm{x}}(t)\! = \bm{0}_{(Nn)\times 1}$,  exponentially,  
we have 
	\begin{align}
		\lim_{t\to +\infty} \bm{x}(t) = \lim\limits_{t\to +\infty} (\bar{\bm{x}}(t) + \mathrm{x^*}) = \mathrm{x^*}
	\end{align}
Combining \eqref{error Def 4 bar-s-v 2}, \eqref{error Def 4 es-ev 2} and the initial condition $\sum_{j=1}^N v_j(0)=\bm{0}_{n\times 1}$ in \eqref{compact 4 aggregate v}, we have $\bm{e}_{v1}(0)=\mathbf{r}^{\rm T}(\bm{v}(0)-\alpha\mathrm{P}_n^\perp\phi(\bm{x}(0)))=\mathbf{r}^{\rm T}\bm{v}(0)=\bm{0}_{n\times 1}$. Thus,  $\bm{e}_{v1}(t)=\bm{e}_{v1}(0)=\bm{0}_{n\times 1}$ for all $t\geq 0$ by \eqref{error system 4 ev1}, which together with the fact that $\lim\limits_{t\to +\infty} \bm{e}_s(t) = \bm{0}_{(Nn)\times 1}, \lim\limits_{t\to +\infty} \bm{e}_{v2}(t) = \bm{0}_{(Nn-n)\times 1}$ both 
exponentially implies the following:   
	\begin{subequations}\label{main thm1 4 respective final converg s and nu}
		\begin{align}
			&\lim\limits_{t\to +\infty} \!\bm{s}(t) = \lim\limits_{t\to +\infty} (\mathbf{Q}\bm{e}_s(t) \!+\! \mathrm{P}_n\phi(\bm{x}(t)\!)) = \mathrm{P}_n\phi(\mathrm{x^*}) \\
			&\lim\limits_{t\to +\infty} \!\bm{v}(t) = \lim\limits_{t\to +\infty} \!\big(\mathbf{Q}\bm{e}_v(t) \!+\! \alpha\mathrm{P}_n^\perp\phi(\bm{x}(t)\!)\big) \notag \\
			&\hspace{15.5mm} = \alpha\mathrm{P}_n^\perp\phi(\mathrm{x^*})
		\end{align}
	\end{subequations}
	both exponentially. 
	The proof is thus complete.

	\end{proof}

	   Let  $\bm{\zeta}=\col(\bm{e}_{s1},\bm{e}_{s2},\bm{e}_{v2})\in\mathbb{R}^{2Nn-n}$ and define 
	    \begin{align}\label{def 4 switching At}
	    	A(t) \!=\! \begin{bmatrix}
	    		-\alpha I_n \!\!\!&\!\!\! \bm{0} \!\!\!&\!\!\! \bm{0}\\
	    		\bm{0} \!&\! -\alpha I_{Nn-n}\!-\!\beta\mathbf{R}^{\rm T}\mathbf{L}_{\rho(t)}\mathbf{R} \!&\! -I_{Nn-n} \\
	    		\bm{0} \!&\! \alpha\beta\mathbf{R}^{\rm T}\mathbf{L}_{\rho(t)}\mathbf{R} \!&\! \bm{0}
	    	\end{bmatrix}
	    \end{align}
	    Then, system~ \eqref{alternate compact 4 aggregate} can be put in the following compact form: 
	    \begin{subequations}\label{final compact 4 aggregate}
	    	\begin{align}
	    		\dot{\bar{\bm{x}}} &= g_0(\bm{x},\bm{s})   \label{final compact 4 aggregate x}  \\
	    		\dot{\bm{\zeta}} &= A(t)\bm{\zeta} - \begin{bmatrix}
	    			\mathbf{r}^{\rm T}\frac{\partial \phi(\bm{x})}{\partial \bm{x}}g_0(\bm{x},\bm{s}) \\ \bm{0}_{(Nn-n)\times 1} \\ \alpha\mathbf{R}^{\rm T}\frac{\partial \phi(\bm{x})}{\partial \bm{x}}g_0(\bm{x},\bm{s})
	    		\end{bmatrix}  \label{final compact 4 aggregate zeta} 
	    	\end{align}
	    \end{subequations}

	    The following  lemma originally established in \cite[Lem.~1]{liu2024aggregate} lays the foundation of our main result. 
        \begin{lemma}\label{lem: exponential stability}
        	Under part $1)$ of Assumption~\ref{assump: jointly connect & weight-balance}, the origin of the linear switched system \begin{equation}\label{unforced sys}
        		\dot{\check{\zeta}}=A(t)\check{\zeta}
        	\end{equation} is exponentially stable.
        \end{lemma}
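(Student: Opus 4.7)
The plan is to exploit the upper-triangular block structure of $A(t)$ in \eqref{def 4 switching At} and reduce \eqref{unforced sys} to a consensus-type question on the dispersion space. The $\bm{e}_{s1}$-block is completely decoupled, giving $\dot{\bm{e}}_{s1}=-\alpha\bm{e}_{s1}$, which is exponentially stable with rate $\alpha$ independently of $\rho(t)$. Attention therefore focuses on the $(\bm{e}_{s2},\bm{e}_{v2})$-subsystem, coupled through $L_R(t):=\mathbf{R}^{\rm T}\mathbf{L}_{\rho(t)}\mathbf{R}$. The difficulty is that $L_R(t)$ can be rank-deficient or sign-indefinite at any single instant because $\mathcal{G}_{\rho(t)}$ is allowed to be disconnected; it is only the time-integral of $L_R(t)+L_R(t)^{\rm T}$ that carries any positivity.

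The key algebraic step I would take is the change of variable $w:=\alpha\bm{e}_{s2}+\bm{e}_{v2}$. A direct combination of the $\bm{e}_{s2}$- and $\bm{e}_{v2}$-rows of \eqref{def 4 switching At} cancels the two $L_R(t)$-terms exactly and leaves $\dot w=-\alpha w$, so $\|w(t)\|\le e^{-\alpha t}\|w(0)\|$ for every admissible switching signal. Eliminating $\bm{e}_{v2}=w-\alpha\bm{e}_{s2}$ in the $\bm{e}_{s2}$-equation then produces the cascade $\dot{\bm{e}}_{s2}=-\beta L_R(t)\bm{e}_{s2}-w$ driven by the exponentially decaying input $w$. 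It therefore suffices to prove uniform exponential stability of the unforced system $\dot{\bm{e}}_{s2}=-\beta L_R(t)\bm{e}_{s2}$; the forced version then inherits exponential stability by a standard cascade / input-to-state argument.

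The main obstacle lies in this residual consensus claim. Weight-balancedness (part $2)$ of Assumption~\ref{assump: jointly connect & weight-balance}) only supplies the pointwise bound $L_R(t)+L_R(t)^{\rm T}\succeq 0$; joint connectivity (part $1)$) must be converted into the uniform integral lower bound $\int_t^{t+T}\bigl(L_R(s)+L_R(s)^{\rm T}\bigr)\,ds\succeq c\,I$ for some $c>0$ independent of $t$, where the restriction to the range of $\mathbf{R}$ kills the unobservable consensus direction and makes the constant $c$ strictly positive. This persistent-excitation-type inequality is the genuinely technical step. Granted it, I would use the Lyapunov candidate $V(\bm{e}_{s2})=\|\bm{e}_{s2}\|^2$, whose derivative $\dot V=-\beta\bm{e}_{s2}^{\rm T}(L_R(t)+L_R(t)^{\rm T})\bm{e}_{s2}\le 0$ is only nonpositive instantaneously, and compare successive windows of length $T$ to obtain $V(t+T)\le\kappa V(t)$ with $\kappa\in(0,1)$, hence uniform exponential decay.

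Reassembling the three pieces — exponential decay of $\bm{e}_{s1}$, of $w$, and of $\bm{e}_{s2}$ under the exponentially decaying perturbation $w$ — yields exponential stability of $\check{\zeta}$, as claimed. Since the statement is quoted verbatim from \cite[Lem.~1]{liu2024aggregate}, the integral-bound / window-by-window step can be imported directly from that reference, but the clean cascade reduction via the invariant $w=\alpha\bm{e}_{s2}+\bm{e}_{v2}$ is what makes the argument short.
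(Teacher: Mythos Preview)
The paper itself does not prove this lemma; it simply cites \cite[Lem.~1]{liu2024aggregate}. Your proposal is therefore not a comparison against a proof in the present paper but an attempt to supply one. The cascade reduction via $w=\alpha\bm{e}_{s2}+\bm{e}_{v2}$ is correct and elegant: one checks directly from \eqref{def 4 switching At} that $\dot w=-\alpha w$, and substituting $\bm{e}_{v2}=w-\alpha\bm{e}_{s2}$ yields $\dot{\bm e}_{s2}=-\beta L_R(t)\bm e_{s2}-w$, so everything does reduce to exponential stability of the unforced dispersion dynamics $\dot{\bm e}_{s2}=-\beta L_R(t)\bm e_{s2}$.

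The gap lies in that last step. The lemma as stated assumes only part~1) of Assumption~\ref{assump: jointly connect & weight-balance}, yet your sketch explicitly invokes part~2) (weight-balancedness) to obtain the pointwise bound $L_R(t)+L_R(t)^{\rm T}\succeq 0$. Without weight-balancedness one has $\bm 1_N^{\rm T}\mathcal L_{\rho(t)}\neq \bm 0$ in general, and the symmetric part of $L_R(t)=\mathbf{R}^{\rm T}\mathbf{L}_{\rho(t)}\mathbf{R}$ can be sign-indefinite; then $\dot V$ for $V=\|\bm e_{s2}\|^2$ is not nonpositive instantaneously, and your window comparison $V(t+T)\le\kappa V(t)$ does not follow as written. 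Either you need a genuinely different argument for the directed, non-balanced case (for instance a contraction estimate on the state-transition matrix, or a $\max$--$\min$ type Lyapunov function for directed consensus under uniform joint connectivity with dwell time), or you should acknowledge that your proof in fact establishes the lemma under \emph{both} parts of Assumption~\ref{assump: jointly connect & weight-balance} rather than under part~1) alone.
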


	   \begin{remark}\label{rem: properties of H(t)}
	   	   By the proof of \cite[Thm.~1]{liu2024aggregate}, Lemma~\ref{lem: exponential stability} ascertains the existence of a time-varying bounded matrix $H(t)\in\mathbb{R}^{(2Nn-n)\times(2Nn-n)}$ such that
	   	   $h_1\|v\|^2\leq v^{\rm T}H(t)v\leq h_2\|v\|^2$ for two positive constants $h_1,h_2>0$ and any vector $v$,  and,  on each time interval $[t_j,t_{j+1})$ with $j=0,1,\cdots,$  $H(t)$ satisfies the following differential Lyapunov equation
	   	   \begin{align}\label{differential Lyap eqn2}
	   	   	-\dot{H}(t) &= A^{\rm T}(t)H(t) + H(t)A(t) + I_{2Nn-n}
	   	   \end{align}
	   	   Since $H (t)$ is bounded,  there exists a positive constant $p>0$ such that 
	   	   \begin{align}\label{bounded H(t)}
	   	   	\|H(t)\|\leq p, \hspace{3mm} \forall t\geq 0
	   	   \end{align}

	   	   Note that \eqref{final compact 4 aggregate zeta} can be viewed as a perturbed version of the unforced system \eqref{unforced sys}.
	   	   Therefore, the existence of $H(t)$ provides a natural construction of the Lyapunov function candidate \eqref{overall Lyapunov v1 + v2} in Theorem~\ref{thm: aggregate case} later.
	   \end{remark}

	Then we present the following main theorem. 
	\begin{theorem}\label{thm: aggregate case}
		Under Assumptions~\ref{assump: convex fi} to  \ref{assump: jointly connect & weight-balance},  let $\delta_1^*=\frac{2\mu}{\theta^2}>0$ and 
		$\delta_2^*(\delta_1) = \frac{k_1(\delta_1)}{k_1(\delta_1)k_3(\delta_1)+k_2^2(\delta_1)}>0$, where 
		\begin{subequations}\label{params of delta_123 M}
			\begin{align}
				k_1(\delta_1) &= \frac{\delta_1(2\mu - \delta_1\theta^2)}{2+\delta_1\theta} \label{params of delta_1}  \\
				k_2(\delta_1) &= \frac{(\delta_1\theta+2)M+\delta_1\hat{\theta}}{2} \label{params of delta_2} \\
				k_3(\delta_1) &= \delta_1M\hat{\theta} \label{params of delta_3}  \\
				M &= 2pl\sqrt{\alpha^2+1}  \label{params of M}
			\end{align}
		\end{subequations}	
		Then, 	for any $\alpha>0, \beta>0$, 
		any  $0<\delta_1<\delta_1^*$,  $0<\delta_2<\delta_2^*(\delta_1)$,  any initial conditions $x_i(0)\in U_i, s_i(0)\in \mathbb{R}^n$, and $v_i(0)\in \mathbb{R}^n$ satisfying $\sum_{i=1}^{N} v_i(0) = \mathbf{0}_{n\times 1}$, the solution $\mathrm{col}(\bm{x}(t),\bm{s}(t),\bm{v}(t))$ of the closed-loop system \eqref{compact 4 aggregate} converges to the equilibrium $\mathrm{col}(\mathrm{x^*},\mathrm{P}_n\phi(\mathrm{x^*}),\alpha\mathrm{P}^\perp_n\phi(\mathrm{x^*}))$ exponentially.
		\end{theorem}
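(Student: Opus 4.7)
The plan is to invoke Proposition~\ref{proposition: alternate sys} to reduce exponential convergence of the original trajectory to exponential stability of the transformed system~\eqref{final compact 4 aggregate} in $(\bar{\bm{x}},\bm{\zeta})$, and to establish the latter via the time-varying Lyapunov candidate $V(t)=\tfrac{1}{2}\|\bar{\bm{x}}\|^{2}+\bm{\zeta}^{\rm T}H(t)\bm{\zeta}$, where $H(t)$ is the uniformly bounded piecewise-smooth matrix provided by Remark~\ref{rem: properties of H(t)} that satisfies $h_{1}\|\bm{\zeta}\|^{2}\le\bm{\zeta}^{\rm T}H(t)\bm{\zeta}\le h_{2}\|\bm{\zeta}\|^{2}$ and the differential Lyapunov equation~\eqref{differential Lyap eqn2} on each non-switching interval $[t_{j},t_{j+1})$.

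For $\dot V_{1}=\bar{\bm{x}}^{\rm T}g_{0}(\bm{x},\bm{s})$ I would combine the fixed-point identity $\mathrm{x^{*}}=\mathbf{P}_{\mathrm{U}}(\mathrm{x^{*}}-\delta_{1}F(\mathrm{x^{*}}))$ from~\eqref{NE x* equality}, the variational characterization of $\mathbf{P}_{\mathrm{U}}$ and non-expansiveness~\eqref{nonexpansive of Eucli project}, the $\mu$-strong monotonicity and $\theta$-Lipschitz continuity of $F$, and the $\hat{\theta}$-Lipschitz continuity of $\mathbf{F}$ in its second argument from Assumption~\ref{assump: F 2nd Lipsch & phi bounded}(1), together with the identity $F(\bm{x})=\mathbf{F}(\bm{x},\bm{1}_{N}\otimes\sigma(\bm{x}))$ from~\eqref{mathbf F def2} which implies $\|\mathbf{F}(\bm{x},\bm{s})-F(\bm{x})\|\le\hat{\theta}\|\bar{\bm{s}}\|$. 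A careful arrangement of these ingredients should yield an estimate of the form $\dot V_{1}\le -\delta_{2}k_{1}(\delta_{1})\|\bar{\bm{x}}\|^{2}+\delta_{2}\delta_{1}\hat{\theta}\|\bar{\bm{x}}\|\|\bar{\bm{s}}\|$ with $k_{1}$ as in~\eqref{params of delta_1}; the requirement $k_{1}>0$ forces exactly $\delta_{1}<2\mu/\theta^{2}=\delta_{1}^{*}$.

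For $\dot V_{2}$, on each $[t_{j},t_{j+1})$ the Lyapunov equation collapses the quadratic part to $\dot V_{2}=-\|\bm{\zeta}\|^{2}-2\bm{\zeta}^{\rm T}H(t)w$, where $w$ is the perturbation column on the right of~\eqref{final compact 4 aggregate zeta}. Bounding $\|w\|\le l\sqrt{\alpha^{2}+1}\|g_{0}\|$ via Assumption~\ref{assump: F 2nd Lipsch & phi bounded}(2) and the orthogonality of $[\mathbf{r},\mathbf{R}]$, and combining with $\|H(t)\|\le p$ from~\eqref{bounded H(t)}, gives $|2\bm{\zeta}^{\rm T}Hw|\le M\|\bm{\zeta}\|\|g_{0}\|$ with $M$ as in~\eqref{params of M}. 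The non-expansiveness and Lipschitz bounds further yield $\|g_{0}\|\le\delta_{2}[(2+\delta_{1}\theta)\|\bar{\bm{x}}\|+\delta_{1}\hat{\theta}\|\bar{\bm{s}}\|]$, and since $\|\bar{\bm{s}}\|=\|\bm{e}_{s}\|\le\|\bm{\zeta}\|$ we obtain $\dot V_{2}\le -(1-\delta_{2}k_{3})\|\bm{\zeta}\|^{2}+\delta_{2}M(2+\delta_{1}\theta)\|\bar{\bm{x}}\|\|\bm{\zeta}\|$. Summing the two estimates produces $\dot V\le -\xi^{\rm T}\Xi\xi$ with $\xi=(\|\bar{\bm{x}}\|,\|\bm{\zeta}\|)^{\rm T}$ and $\Xi$ the symmetric matrix whose diagonal is $(\delta_{2}k_{1},\,1-\delta_{2}k_{3})$ and off-diagonal is $-\delta_{2}k_{2}$, where the $k_{2}$ of~\eqref{params of delta_2} is precisely half the sum of the two cross coefficients $\delta_{1}\hat{\theta}$ and $M(2+\delta_{1}\theta)$. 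By Sylvester's criterion $\Xi\succ 0$ is equivalent to $k_{1}>0$ together with $\delta_{2}k_{1}(1-\delta_{2}k_{3})>\delta_{2}^{2}k_{2}^{2}$, i.e., $\delta_{1}<\delta_{1}^{*}$ and $\delta_{2}<k_{1}/(k_{1}k_{3}+k_{2}^{2})=\delta_{2}^{*}(\delta_{1})$, matching the standing hypotheses. Hence $\dot V\le -cV$ between switching instants; because $H(t)$ stays uniformly in $[h_{1}I,h_{2}I]$, any jump of $H$ at $t_{j}$ causes only a uniformly bounded multiplicative change in $V_{2}$, and this together with the dwell-time condition delivers exponential decay of $V$. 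Proposition~\ref{proposition: alternate sys} then gives the claimed exponential convergence of $\mathrm{col}(\bm{x}(t),\bm{s}(t),\bm{v}(t))$ to the stated equilibrium.

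The principal technical hurdle is obtaining the $\dot V_{1}$ estimate with the sharp constant $k_{1}$: a direct application of non-expansiveness followed by Cauchy--Schwarz and strong monotonicity is too loose to produce the denominator $2+\delta_{1}\theta$, so one must exploit the variational inequality characterization of $\mathbf{P}_{\mathrm{U}}$ at both $\Phi(\bm{x},\bm{s}):=\mathbf{P}_{\mathrm{U}}(\bm{x}-\delta_{1}\mathbf{F}(\bm{x},\bm{s}))$ and $\mathrm{x^{*}}$ in tandem with strong monotonicity in a way that does not slacken constants, since any slack would inflate the denominator of $\delta_{2}^{*}(\delta_{1})$ and break the exact threshold declared in the theorem.
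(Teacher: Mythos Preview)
Your overall architecture matches the paper's proof exactly: reduce via Proposition~\ref{proposition: alternate sys}, take $V=\tfrac12\|\bar{\bm{x}}\|^{2}+\bm{\zeta}^{\rm T}H(t)\bm{\zeta}$, bound $\dot V_{1}$ and $\dot V_{2}$ separately, combine into a $2\times 2$ quadratic form in $(\|\bar{\bm{x}}\|,\|\bm{\zeta}\|)$, and read off the thresholds $\delta_{1}^{*},\delta_{2}^{*}$ from Sylvester's criterion. Your estimates for $\dot V_{2}$ and for $\|g_{0}\|$ are precisely those of the paper.

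Two points deserve correction. First, on the sharp $k_{1}$: the paper does \emph{not} appeal to the variational inequality characterization of $\mathbf{P}_{\mathrm{U}}$ at two points as you propose. It uses only non-expansiveness~\eqref{nonexpansive of Eucli project} and the fixed-point identity~\eqref{NE x* equality} to reach
\[
\dot V_{1}\le \delta_{1}\delta_{2}\hat{\theta}\|\bar{\bm{x}}\|\|\bar{\bm{s}}\|+\delta_{2}\|\bar{\bm{x}}\|\,\|\bar{\bm{x}}-\delta_{1}(F(\bm{x})-F(\mathrm{x^*}))\|-\delta_{2}\|\bar{\bm{x}}\|^{2},
\]
and then applies the elementary conjugate identity $a b-a^{2}=-a\,\frac{a^{2}-b^{2}}{a+b}$ with $a=\|\bar{\bm{x}}\|$ and $b=\|\bar{\bm{x}}-\delta_{1}(F(\bm{x})-F(\mathrm{x^*}))\|$. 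Expanding $a^{2}-b^{2}$ and using strong monotonicity plus $\theta$-Lipschitzness gives the numerator $\ge(2\delta_{1}\mu-\delta_{1}^{2}\theta^{2})\|\bar{\bm{x}}\|^{2}$, while triangle inequality plus $\theta$-Lipschitzness gives the denominator $\le(2+\delta_{1}\theta)\|\bar{\bm{x}}\|$; this is exactly where the $2+\delta_{1}\theta$ appears. Your intuition that ``plain non-expansiveness plus Cauchy--Schwarz is too loose'' is right, but the remedy is this algebraic trick, not the obtuse-angle inequality.

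Second, your handling of switching instants is unnecessary and slightly misleading. The matrix $H(t)$ supplied by Lemma~\ref{lem: exponential stability} (via the state-transition-matrix integral construction in \cite{liu2024aggregate}) is continuous in $t$, so $V$ is continuous across switches; the paper therefore passes directly from $\dot V\le -cV$ on each $[t_{j},t_{j+1})$ to $V(t)\le V(0)e^{-ct}$ globally, with no jump analysis or dwell-time argument required.
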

	\begin{proof}	
	 By Proposition~\ref{proposition: alternate sys},  under part $2)$ of Assumption~\ref{assump: jointly connect & weight-balance}, it suffices to show that the solution of 
	system \eqref{alternate compact 4 aggregate}, or equivalently,  \eqref{final compact 4 aggregate} converges exponentially to its equilibrium point for any 
	$x_i(0)\in U_i$, any $\bm{e}_{s} (0)$ and $\bm{e}_{v2}(0)$. 
	To this end, first note that by Remark \ref{remcontri1},   $x_i(t)\in U_i$ for all $t \geq 0$. 
	Let $V_1(\bar{\bm{x}}) = \frac{1}{2}\|\bar{\bm{x}}\|^2$,  $ V_2(\bm{\zeta},t) =  \bm{\zeta}^{\rm T}H(t)\bm{\zeta}$. Consider a time-varying Lyapunov function for the closed-loop system \eqref{final compact 4 aggregate} as follows
		\begin{align}\label{overall Lyapunov v1 + v2}
			\quad V(\bm{\bar{x}},\bm{\zeta},t) 
			= V_1(\bar{\bm{x}}) + V_2(\bm{\zeta},t) = \frac{1}{2}\|\bar{\bm{x}}\|^2 + \bm{\zeta}^{\rm T}H(t)\bm{\zeta}  
		\end{align}
	Then,  by Remark~\ref{rem: properties of H(t)}, 
	 $V(\bm{\bar{x}},\bm{\zeta},t)$ is positive-definite, proper, and decrescent  in the sense that $\min\{\frac{1}{2},h_1\}(\|\bar{\bm{x}}\|^2\!+\!\|\bm{\zeta}\|^2)\leq V(\bm{\bar{x}},\bm{\zeta},t)\leq \max\{\frac{1}{2},h_2\}(\|\bar{\bm{x}}\|^2\!+\!\|\bm{\zeta}\|^2)$.

		The time derivative of $V_1$ along the solution of \eqref{final compact 4 aggregate x} satisfies 
		\begin{align}\label{V1 dot}
			\dot{V}_1  &= \delta_2\bar{\bm{x}}^{\rm T}\big(\mathbf{P}_{\mathrm{U}}(\bm{x}-\delta_1\mathbf{F}(\bm{x},\bm{s}))-\bm{x}\big) \nonumber \\	
			&\overset{(a)}{=} \delta_2\bar{\bm{x}}^{\rm T}\big(\mathbf{P}_{\mathrm{U}}(\bm{x}-\delta_1\mathbf{F}(\bm{x},\bm{s})\!)-\mathbf{P}_{\mathrm{U}}(\bm{x}-\delta_1\mathbf{F}(\bm{x},\mathrm{P}_n\phi(\bm{x})\!)\!)\big)   \notag \\
			 &\hspace{5mm}  +\delta_2\bar{\bm{x}}^{\rm T}\big(\mathbf{P}_{\mathrm{U}}(\bm{x}-\delta_1\mathbf{F}(\bm{x},\mathrm{P}_n\phi(\bm{x})\!)\!)-\bm{x}\big) \notag \\
			&\overset{(b)}{\leq} \delta_1\delta_2\|\bar{\bm{x}}\|\hat{\theta}\|\bm{s}\!-\! \mathrm{P}_n\phi(\bm{x})\| \!+  \delta_2\bar{\bm{x}}^{\rm T}\big(\mathbf{P}_{\mathrm{U}}(\bm{x}\!-\!\delta_1\mathbf{F}(\bm{x},\mathrm{P}_n\phi(\bm{x})\!)\!) \notag \\
			&\hspace{5mm} -\mathbf{P}_{\mathrm{U}}(\mathrm{x^*}-\delta_1F(\mathrm{x^*})\!)\big) -\delta_2\bar{\bm{x}}^{\rm T}\big(\bm{x}-\mathrm{x^*}\big) \notag \\
			&\overset{(c)}{\leq} \delta_1\delta_2\hat{\theta}\|\bar{\bm{x}}\|\|\bar{\bm{s}}\|  \!+\!  \delta_2\|\bar{\bm{x}}\|\|\bar{\bm{x}}-\!\delta_1(F(\bm{x})\!-\!F(\mathrm{x^*})\!)\| \!-\! \delta_2\|\bar{\bm{x}}\|^2  
		\end{align}
		   where $(a)$ results from adding and subtracting the term $\mathbf{P}_{\mathrm{U}}(\bm{x}-\delta_1\mathbf{F}(\bm{x},\mathrm{P}_n\phi(\bm{x})\!)\!)$, $(b)$ follows from \eqref{nonexpansive of Eucli project},  \eqref{NE x* equality}, and part $1)$ of Assumption~\ref{assump: F 2nd Lipsch & phi bounded}, $(c)$ is derived from \eqref{nonexpansive of Eucli project}, \eqref{error Def 4 bar-s-v 1}, and the identity \eqref{mathbf F def2}. 

        For the last two terms in \eqref{V1 dot}, like in \cite{dai2025distributed}, we consider the following two cases:
        \begin{enumerate}[(i)]
        	\item When $\bar{\bm{x}}\neq\bm{0}$, one has
        	\begin{align}\label{merge two terms}
        		&\hspace{5mm} \delta_2\|\bar{\bm{x}}\|\|\bar{\bm{x}}-\!\delta_1(F(\bm{x})\!-\!F(\mathrm{x^*})\!)\| \!-\! \delta_2\|\bar{\bm{x}}\|^2 \notag \\
        		&= -\delta_2\|\bar{\bm{x}}\|\frac{\|\bar{\bm{x}}\|^2-\|\bar{\bm{x}}-\!\delta_1(F(\bm{x})\!-\!F(\mathrm{x^*})\!)\|^2}{\|\bar{\bm{x}}\|+\|\bar{\bm{x}}-\!\delta_1(F(\bm{x})\!-\!F(\mathrm{x^*})\!)\|}  \notag \\
        		&= -\delta_2\|\bar{\bm{x}}\|\frac{2\delta_1\bar{\bm{x}}^{\rm T}(F(\bm{x})\!-\!F(\mathrm{x^*})\!)\!-\delta_1^2\|F(\bm{x})\!-\!F(\mathrm{x^*})\|^2}{\|\bar{\bm{x}}\|+\|\bar{\bm{x}}-\!\delta_1(F(\bm{x})\!-\!F(\mathrm{x^*})\!)\|}  \notag \\
        		&\overset{(a)}{\leq} -\delta_2\|\bar{\bm{x}}\|\frac{2\delta_1\mu\|\bar{\bm{x}}\|^2 - \delta_1^2\theta^2\|\bar{\bm{x}}\|^2}{\|\bar{\bm{x}}\|+\|\bar{\bm{x}}-\!\delta_1(F(\bm{x})\!-\!F(\mathrm{x^*})\!)\|} \notag \\
        		&\overset{(b)}{\leq} -\delta_2\|\bar{\bm{x}}\|\frac{2\delta_1\mu\|\bar{\bm{x}}\|^2 - \delta_1^2\theta^2\|\bar{\bm{x}}\|^2}{2\|\bar{\bm{x}}\|+\delta_1\theta\|\bar{\bm{x}}\|} \notag \\
        		&=  -\delta_1\delta_2\frac{2\mu - \delta_1\theta^2}{2+\delta_1\theta}\|\bar{\bm{x}}\|^2
        	\end{align}
        	    To derive the numerator in $(a)$, we have made use of the strong monotone property and Lipschitz continuity of $F(\cdot)$ by part $iii)$ of Assumption~\ref{assump: convex fi} and Assumption~\ref{assump: F monotone & theta Lipschitz Globally}, which yields
        	\begin{align}
        		&\hspace{5mm}2\delta_1\bar{\bm{x}}^{\rm T}(F(\bm{x}) \!-\!F(\mathrm{x^*})\!)-\delta_1^2\|F(\bm{x})-F(\mathrm{x^*})\|^2 \notag \\
        		&\geq 2\delta_1\mu\|\bar{\bm{x}}\|^2 - \delta_1^2\theta^2\|\bar{\bm{x}}\|^2 > 0
        	\end{align}
        	    The last strict positive sign follows from $0<\delta_1<\delta_1^*=\frac{2\mu}{\theta^2}$.
        	    To derive~$(b)$, we have utilized the $\theta$-Lipschitz continuity of $F(\cdot)$ by Assumption~\ref{assump: F monotone & theta Lipschitz Globally}.
        	
        	\item When $\bar{\bm{x}}=\bm{0}$, i.e., $\bm{x}=\mathrm{x^*}$, inequality~\eqref{merge two terms} obviously holds since both sides are zero.       	
        \end{enumerate}
       
             Using \eqref{merge two terms} in  \eqref{V1 dot} gives
             \begin{align}\label{V1 dot simplified}
             	\dot{V}_1 &\leq \delta_1\delta_2\hat{\theta}\|\bar{\bm{x}}\|\|\bar{\bm{s}}\|  -\frac{\delta_1\delta_2(2\mu - \delta_1\theta^2)}{2+\delta_1\theta}\|\bar{\bm{x}}\|^2 \notag \\
             	&\leq  \delta_1\delta_2\hat{\theta}\|\bar{\bm{x}}\|\|\bm{\zeta}\|  -\frac{\delta_1\delta_2(2\mu - \delta_1\theta^2)}{2+\delta_1\theta}\|\bar{\bm{x}}\|^2
             \end{align}
                where the last inequality follows from $\|\bar{\bm{s}}\|=\|\mathbf{Q}\bm{e}_s\|=\|\bm{e}_s\|\leq\|\col(\bm{e}_s,\bm{e}_{v2})\|=\|\bm{\zeta}\|$ by \eqref{error Def 4 es-ev 1}.

		Next, consider the time derivative of $V_2$ w.r.t. \eqref{final compact 4 aggregate zeta}.
		For any $t\in[t_j,t_{j+1})$ with $j=0,1,\cdots,$ one has
		\begin{align}\label{V2 dot}
			\dot{V}_2 &= \bm{\zeta}^{\rm T}H(t)\dot{\bm{\zeta}} + \bm{\zeta}^{\rm T}\dot{H}(t)\bm{\zeta}  + \dot{\bm{\zeta}}^{\rm T}H(t)\bm{\zeta} \notag \\
			&= \bm{\zeta}^{\rm T}(A^{\rm T}(t)H(t) + \dot{H}(t) + H(t)A(t))\bm{\zeta}  \notag \\
			&\hspace{4mm} - 2\bm{\zeta}^{\rm T}H(t)\begin{bmatrix}
				\mathbf{r}^{\rm T}\frac{\partial \phi(\bm{x})}{\partial \bm{x}}g_0(\bm{x},\bm{s}) \\ \bm{0}_{(Nn-n)\times 1} \\ \alpha\mathbf{R}^{\rm T}\frac{\partial \phi(\bm{x})}{\partial \bm{x}}g_0(\bm{x},\bm{s})
			\end{bmatrix}  \notag \\
			&\overset{(a)}{\leq}  -\|\bm{\zeta}\|^2 \!+ 2\sqrt{\!\alpha^2\!+\!1}\|\bm{\zeta}\|\!\|H(t)\|\!\|\mathbf{Q}^{\rm T}\|\!\|\frac{\partial \phi(\bm{x})}{\partial \bm{x}}\|\!\|g_0(\bm{x},\bm{s})\|     \notag \\
			&\overset{(b)}{\leq}  -\|\bm{\zeta}\|^2 \!+ 2pl\sqrt{\!\alpha^2\!+\!1}\|\bm{\zeta}\|\|g_0(\bm{x},\bm{s})\| 
		\end{align}
		    where $(a)$ follows from  \eqref{differential Lyap eqn2} and the Cauchy-Schwarz inequality, $(b)$ is derived from \eqref{bounded H(t)}, $\|\mathbf{Q}^{\rm T}\|=1$, and part $2)$ of Assumption~\ref{assump: F 2nd Lipsch & phi bounded}.
		 
		   Before we proceed, let us give an estimate of the norm of $g_0$  as follows
		   \begin{align}\label{norm of g0}
		   	   &\hspace{5mm}\|g_0(\bm{x},\bm{s})\| \notag \\
		   	   &= \delta_2\|\mathbf{P}_{\mathrm{U}}(\bm{x}-\delta_1\mathbf{F}(\bm{x},\bm{s})\!)-\bm{x}\| \notag \\
		   	   &\overset{(a)}{\leq} \delta_2\|\mathbf{P}_{\mathrm{U}}(\bm{x}-\delta_1\mathbf{F}(\bm{x},\bm{s})\!)-\mathbf{P}_{\mathrm{U}}(\bm{x}-\delta_1\mathbf{F}(\bm{x},\mathrm{P}_n\phi(\bm{x})\!)\!)\| \hspace{.5mm}   \notag \\
		   	   &\hspace{5mm} +\delta_2\|\mathbf{P}_{\mathrm{U}}(\bm{x}\!-\!\delta_1\mathbf{F}(\bm{x},\mathbf{P}_n\phi(\bm{x})\!)\!)\!-\!\mathbf{P}_\mathrm{U}(\mathrm{x^*}\!-\!\delta_1F(\mathrm{x^*})\!)\| \notag \\
		   	   &\hspace{5mm} + \delta_2\|\bm{x}-\mathrm{x^*}\|  \notag \\
		   	   &\overset{(b)}{\leq} \delta_1\delta_2\|\mathbf{F}(\bm{x},\bm{s})-\mathbf{F}(\bm{x},\mathrm{P}_n\phi(\bm{x})\!)\| +  \delta_1\delta_2\|F(\bm{x})-F(\mathrm{x^*})\| \notag \\
		   	   &\hspace{5mm} + 2\delta_2\|\bar{\bm{x}}\| \notag \\
		   	   &\overset{(c)}{\leq} \delta_1\delta_2\hat{\theta}\|\bar{\bm{s}}\| +  \delta_1\delta_2\theta\|\bar{\bm{x}}\| +  2\delta_2\|\bar{\bm{x}}\| \notag \\
		   	    &\leq  \delta_1\delta_2\hat{\theta}\|\bm{\zeta}\| +  (\delta_1\theta+2)\delta_2\|\bar{\bm{x}}\| 
		   \end{align}
		    where we have used \eqref{NE x* equality} to derive $(a)$ since $\delta_1>0$,  used \eqref{nonexpansive of Eucli project} and the identity $\mathbf{F}(\bm{x},\mathrm{P}_n\phi(\bm{x}))\!=\!F(\bm{x})$ to derive $(b)$, and  used Assumption~\ref{assump: F monotone & theta Lipschitz Globally}, part $1)$ of Assumption~\ref{assump: F 2nd Lipsch & phi bounded} to derive $(c)$.

		    Define $M=2pl\sqrt{\alpha^2+1}>0$.
		    Then, by \eqref{norm of g0}, one can further simplify \eqref{V2 dot} as follows
		    \begin{align}\label{V2 dot simplified}
		    	\dot{V}_2 
		    	&\leq  -(1-\delta_1\delta_2M\hat{\theta})\|\bm{\zeta}\|^2 + (\delta_1\theta+2)\delta_2M\|\bm{\zeta}\|\|\bar{\bm{x}}\| 
		    \end{align}
		
		 Combining \eqref{overall Lyapunov v1 + v2}, \eqref{V1 dot simplified} and \eqref{V2 dot simplified} gives
		   	\begin{align}\label{final Vdot}
		   	&\dot{V} =  \dot{V}_1 + \dot{V}_2 \notag \\
		   	\leq& -   \begin{bmatrix}
		   		\|\bar{\bm{x}}\| \\ \|\bm{\zeta}\|
		   	\end{bmatrix}^{\rm T}\underbrace{\begin{bmatrix}
		   		\delta_2	k_1(\delta_1) & - \delta_2 k_2(\delta_1) \\
		   			- \delta_2 k_2(\delta_1) & 1-\delta_2k_3(\delta_1)
		   	\end{bmatrix}}_{B(\delta_1,\delta_2)}\begin{bmatrix}
		   		\|\bar{\bm{x}}\| \\ \|\bm{\zeta}\|
		   	\end{bmatrix}
		   \end{align}
		   where $k_1(\delta_1), k_2(\delta_1), k_3(\delta_1)$ are defined in \eqref{params of delta_1}-\eqref{params of delta_3}, respectively.
		   For any positive $\delta_1$ such that $\delta_1<\delta_1^*=\frac{2\mu}{\theta^2}$, one has $k_1(\delta_1)>0$ by \eqref{params of delta_1}.
		   Then, $\delta_2k_1(\delta_1)>0$ for any $\delta_2>0$. Setting the determinant of $B(\delta_1,\delta_2)$ to be greater than zero gives 
		   \begin{align}\label{determinant of B_delta_12}
		   	   \text{det}\big(B(\delta_1,\delta_2)\big) = \delta_2(k_1-(k_1k_3+k_2^2)\delta_2) > 0
		   \end{align}
		   One can  verify that,  for any $0<\delta_2<\delta_2^*=\frac{k_1(\delta_1)}{k_1(\delta_1)k_3(\delta_1)+k_2^2(\delta_1)}$, inequality~\eqref{determinant of B_delta_12} holds. In this case,   \eqref{final Vdot} yields
		   \begin{align}\label{simplified final Vdot}
		   	    \dot{V} &\leq -\lambda_{min}(B(\delta_1,\delta_2))(\|\bar{\bm{x}}\|^2+\|\bm{\zeta}\|^2) \notag \\
		   	    &\leq -\frac{\lambda_{min}(B(\delta_1,\delta_2))}{\max\{\frac{1}{2},h_2\}}V \\
		   	   \Longrightarrow \hspace{2mm} & V(t)\leq V(0)e^{-\frac{\lambda_{min}(B(\delta_1,\delta_2))}{\max\{\frac{1}{2},h_2\}}t}
		   \end{align}  
		  That is,    
		  system \eqref{alternate compact 4 aggregate} or \eqref{final compact 4 aggregate} is exponentially stable with its domain of attraction containing any initial conditions $x_i(0)\in U_i$, any $\bm{e}_{s} (0)$ and any $\bm{e}_{v2}(0)$. The proof is thus complete by noting Proposition~\ref{proposition: alternate sys}. 
		   	\end{proof}

	\begin{remark} \label{remGlobal}
	Since the solution of  system \eqref{compact 4 aggregate}  exists globally, from the proof of Theorem \ref{thm: aggregate case}, it is not difficult to see that 
	system \eqref{compact 4 aggregate}  is globally exponentially stable if
	Assumptions~\ref{assump: convex fi} and  \ref{assump: F monotone & theta Lipschitz Globally} are strengthened so that  inequalities 	\eqref{assump: F strong monotone}  and 
		\eqref{assump: F Lipschitz continuous}
		hold for all $\bm{x},\bm{x'} \in \mathbb{R}^{Nn}$. In this case, Theorem \ref{thm: aggregate case} holds for all $x_i(0) \in \mathbb{R}^n$.
		\end{remark}

	   \begin{remark} \label{remcontri2}
	   	References~\cite{liang2022exponentially} and \cite{zhu2020distributed} studied  distributed NE seeking for constrained aggregative games with local set constraints.
	   	Compared with \cite{liang2022exponentially,zhu2020distributed}, our work offers at least three new features as follows:
	   	\begin{enumerate}[1)]
	   		\item Communication network: The approaches proposed in \cite{zhu2020distributed,liang2022exponentially} only apply to fixed and strongly connected graph while  our approach  apply to jointly connected and weight-balanced switching networks which can be directed and disconnected for any time.   
	   		\item Convergence speed: \cite[Thm.~1 \& 2]{zhu2020distributed} only ensure asymptotical stability of both algorithms at the NE point. In contrast, our result guarantees exponential convergence.
	   		\item Parameter adjustment: Using our terminologies, the last line of \cite[Eqs.~(17), (18)]{zhu2020distributed} can be put as follows
	   		\begin{align}\label{decreasing alpha(t)}
	   			\dot{\bm{x}} &= \alpha(t)\big(\mathbf{P}_{\mathrm{U}}(\bm{x}-\mathbf{F}(\bm{x},\bm{s}))-\bm{x}\big)   
	   		\end{align}
	   		where $\alpha(t)>0$ is a decreasing function satisfying $\int_{0}^{\infty} \alpha(t)dt=\infty$ and $\int_{0}^{\infty} \alpha^2(t)dt<\infty$.	         
	   		In contrast, our projected gradient-play module in~\eqref{compact 4 aggregate x} used fixed gains $\delta_1,\delta_2$ to fine tune the algorithm, which  increases the convergence speed and simplify the complexity of overall Lyapunov stability analysis.   
	   	\end{enumerate}
	   \end{remark}

	   \begin{remark}\label{rem: compare with Dai}
	   	  Reference \cite{dai2025distributed} studied the NE seeking for general games on compact sets over jointly strongly connected switching networks. 
	   	  However, the problem in \cite{dai2025distributed} is quite different from the problem here because,  as pointed out in Section II,  the  player $i$ here   lacks some information about its cost function $\bar{f}_i(x_i, {\sigma}(\bm{x}))$ due to the presence of the unknown functions $\phi_j$ for $j\neq i$. 
	   	   Thus,  the overall distributed dynamics~\eqref{compact 4 aggregate} is totally different from \cite[Eq.~(11)]{dai2025distributed} since, as in \cite{he2023Neurocomput, he2024distributed},  
	   	   each player $i$ of  \cite{dai2025distributed}  only needs to estimate  all  players' actions by a distributed estimator~\cite[Eq.~(11b)]{dai2025distributed}. In contrast,  we not only need 
	   	   to estimate  all  players' actions but also the unknown aggregate function, which 
	   	   cannot be done by the distributed estimator in \cite{he2023Neurocomput, he2024distributed, dai2025distributed} and has to be done by the dynamic average consensus module~\eqref{compact 4 aggregate s}-\eqref{compact 4 aggregate v}.  As a result, the convergence analysis of \eqref{compact 4 aggregate} is much more complicated than the convergence analysis of \cite[Eq.~(11)]{dai2025distributed}.  
	   	   Nevertheless,   since,  for $N$ player games with $x_i\in\mathbb{R}^n$, \cite[Eq.~(11b)]{dai2025distributed} requires each player to exchange a $Nn$ dimensional vector with his/her neighbors, the total dimension for \cite[Eq.~(11)]{dai2025distributed} is $Nn+N^2n$. In contrast, to implement the estimation module~\eqref{compact 4 aggregate s}-\eqref{compact 4 aggregate v}, each player only needs to exchange a $2n$ dimensional vector with others, which means that the total dimension of~\eqref{compact 4 aggregate} is $3Nn$, which is strictly less than $Nn+N^2n$ for $N>2$.
	    Thus,  the communication and computation burden of the algorithm here  is much smaller than the one in \cite{dai2025distributed}.  
	   \end{remark}

	\section{Conclusion}\label{sec:5}
	In this paper, we have studied the DNE seeking problem for constrained aggregative games over jointly connected and weight-balanced switching networks, which can be directed and disconnected for every time instant.  
	By integrating the projected gradient technique and the dynamic average consensus algorithm, we have converted our problem to the stability problem of a time-varying nonlinear system which is solved by establishing its exponential stability.  
	Although we have not yet considered coupling constraints, a natural extension for future work is to investigate the generalized Nash equilibrium seeking problem.


\begin{thebibliography}{00}
	
	 	\bibitem{aubin1984differential}   Aubin, J. P., and Cellina, A. (1984). \emph{Differential inclusions: set-valued maps and viability theory} (vol. 264), Springer Science \& Business Media.	
	 
	 %
	 
	 \bibitem{belgioioso2020distributed} G. Belgioioso, A. Nedi{\'c}, and S. Grammatico, ``Distributed generalized Nash equilibrium seeking in aggregative games on time-varying networks", \emph{IEEE Trans. Autom. Control}, vol. 66, no. 5, pp. 2061--2075, May 2021.
	 
	 
	 
	 
	 \bibitem{bianchi2021continuous} M. Bianchi and S. Grammatico, ``Continuous-time fully distributed generalized Nash equilibrium seeking for multi-integrator agents", \emph{Automatica}, vol. 129, 2021, Art. no. 109660.		
	 
	 
	 \bibitem{bianchi2020fully}   M. Bianchi and S. Grammatico,``Fully distributed Nash equilibrium seeking over time-varying communication networks with linear convergence rate", \emph{IEEE Control Syst. Lett.}, vol. 5,  no. 2, pp. 499--504, Apr. 2021.
	 
	 
	 
	 
	 \bibitem{cai2022distributed} X. Cai, F. Xiao, and B. Wei, ``Distributed strategy-updating rules for aggregative games of multi-integrator systems with coupled constraints",  \emph{Syst. Control Lett.}, vol. 170, 2022, Art. no. 105401.
	 
	 
	 
	 
	 \bibitem{dai2025distributed} S. Dai and Y. Zhang, ``Distributed Nash equilibrium seeking for constrained games over jointly strongly connected switching networks under dynamic event-triggered mechanism", \emph{Neurocomputing}, vol. 624, 2025, Art. no. 129465.
	 
	 
	 
	 
	 
	 
	 
	 \bibitem{de2019distributed} C. De Persis, and S. Grammatico, ``Distributed averaging integral Nash equilibrium seeking on networks”, \emph{Automatica}, vol. 110, 2020, Art. no. 108548.
	 
	 
	 
	 
	 \bibitem{gadjov2018passivity} D. Gadjov and L. Pavel, ``A passivity-based approach to Nash equilibrium seeking over networks", \emph{IEEE Trans. Autom. Control}, vol. 64, no. 3, pp. 1077–1092, Mar. 2019.
	 
	 \bibitem{gadjov2020single}  D. Gadjov and L. Pavel, ``Single-timescale distributed GNE seeking for aggregative games over networks via forward–backward operator splitting", \emph{IEEE Trans. Autom. Control}, vol. 66, no. 7, pp. 3259–3266, Jul. 2021.
	 
	 
	 \bibitem{deng2018distributed} Z. Deng, X. Nian, ``Distributed generalized Nash equilibrium seeking algorithm design for aggregative games over weight-balanced digraphs", \emph{IEEE Trans. Neural Netw. Learn. Syst.}, vol. 30, no. 3, pp. 695--706, Mar. 2019. 
	 
	 
	 
	 \bibitem{feng2023adaptively} Z. Feng, G. Hu, X. Dong, J. L{\"u}, ``Adaptively distributed Nash equilibrium seeking of noncooperative games for uncertain heterogeneous linear multi-agent systems", \emph{IEEE Trans. Netw. Sci. Eng.}, vol. 10, no. 6, pp. 3871--3882, Nov./Dec. 2023. 
	 
	 
	 
	 
	 
	 \bibitem{facchinei2003finite} F. Facchinei, and J. S. Pang, (2003). \emph{Finite-dimensional variational inequalities and complementarity problems}, New York, NY: Springer New York.
	 
	 
	 
	 
	 
	 
	 
	 \bibitem{huo2024distributed} W. Huo, K. F. E. Tsang, Y. Yan, K. H. Johansson, and L. Shi,  ``Distributed Nash equilibrium seeking with stochastic event-triggered mechanism", \emph{Automatica}, vol. 162, 2024, Art. no. 111486.
	 
	 
	 \bibitem{he2023Neurocomput}  X. He and J. Huang, ``Distributed Nash equilibrium seeking over strongly connected switching networks," \emph{Neurocomputing}, vol. 533, pp. 206–213, 2023.
	 
	 
	 \bibitem{he2024distributed}  X. He and J. Huang, ``Distributed Nash equilibrium seeking on compact action sets over jointly strongly connected switching networks," \emph{J. Syst. Sci. Complexity}, vol. 37, no. 1, pp. 63--81, 2024.
	 
	 
	 
	 \bibitem{huang2024distributed} B. Huang, Z. Meng, F. Chen, W. Lan, ``Distributed Nash equilibrium seeking for multi-cluster aggregative games with applications to location control", \emph{Int. J. Control}, vol. 97, no. 10, pp. 2253--2263, 2024.  
	 
	 
	 
	 
	 
	 
	 
	 \bibitem{koshal2016OR} J. Koshal, A. Nedic, and U. V. Shanbhag, ``Distributed algorithms for aggregative games on graphs," \emph{Oper. Res.}, vol. 64, no. 3, pp. 680–704, May-Jun. 2016.
	 
	 
	 \bibitem{liang2017distributed} S. Liang, P. Yi, and Y. Hong, ``Distributed Nash equilibrium seeking for aggregative games with coupled constraints", \emph{Automatica}, vol. 85, pp. 179–185, 2017.
	 
	 
	 
	 \bibitem{liang2022exponentially} S. Liang, P. Yi, Y. Hong, K. Peng, ``Exponentially convergent distributed Nash equilibrium seeking for constrained aggregative games", \emph{Auton. Intell. Syst.}, vol. 2, no. 1, 2022. 
	 
	 
	 
	 
	 \bibitem{liu2023dynamic} F. Liu, J. Yu, Y. Hua, X. Dong, Q. Li, Z. Ren, ``Dynamic generalized Nash equilibrium seeking for N-coalition noncooperative games", \emph{Automatica}, vol. 147, 2023, Art. no. 110746. 
	 
	 
	 \bibitem{liu2024robust} L. Liu, M. Lu, S. Wang, F. Deng and J. Chen, ``Robust distributed Nash equilibrium seeking subject to communication constraints", \emph{IEEE Trans. Autom. Control}, 2024, doi: 10.1109/TAC.2024.3476195.
	 
	 
	 \bibitem{liu2024timestamp} N. Liu, S. Tan, Y. Tao, Ye J. L{\"u}, ``A timestamp-based Nesterov’s accelerated projected gradient method for distributed Nash equilibrium seeking in monotone games", \emph{Syst. Control Lett.}, vol. 194, 2024, Art. no. 105966.
	 
	 
	 \bibitem{liu2023online} P. Liu, K. Lu, F. Xiao, B. Wei, Y. Zheng, ``Online distributed learning for aggregative games with feedback delays", \emph{IEEE Trans. Autom. Control}, vol. 68, no. 10, pp. 6385--6392, Oct. 2023.
	 
	 
	 
	 
	 \bibitem{liu2024distributed} Z. Liu and J. Huang, ``Distributed Nash equilibrium seeking for uncertain Euler–Lagrange systems over jointly strongly connected networks,"  \emph{IEEE Trans. Autom. Control}, vol. 69, no. 12, pp. 8293-8307, Dec. 2024.
	 
	 
	 \bibitem{liu2024aggregate} Z. Liu and J. Huang, ``Distributed Nash equilibrium seeking in aggregative games over jointly connected and weight-balanced networks," \emph{IEEE Trans. Autom. Control},
	 2025, doi: 10.1109/TAC.2024.3520809.
	 
	 
	 
	 
	 
	 
	 
	 
	 
	 
	 
	 
	 
	 
	 
	 
	 
	 
	 
	 \bibitem{ruszczynski2011nonlinear} Ruszczynski, A. (2011). \emph{Nonlinear optimization}, Princeton university press.
	 
	 
	 
	 \bibitem{pavel2006noncooperative} L. Pavel, ``A noncooperative game approach to OSNR optimization in optical networks", \emph{IEEE Trans. Autom. Control}, vol. 51, no. 5, pp. 848-852, 2006.
	 
	 
	 
	 \bibitem{poveda2022fixed}  J. I. Poveda, M. Krstic, and T. Basar, ``Fixed-time nash equilibrium seeking in time-varying networks", \emph{IEEE Trans. Autom. Control}, vol. 68, no. 4, pp. 1954–1969, Apr. 2023.
	 
	 
	 
	 
	 
	 \bibitem{shakarami2022distributed} M. Shakarami, C. De Persis, N. Monshizadeh, ``Distributed dynamics for aggregative games: Robustness and privacy guarantees", \emph{Int. J. Robust Nonlinear Control}, vol. 32, no. 9, pp. 5048–5069, Feb. 2022.     
	 
	 
	 
	 
	 
	 
	 \bibitem{wang2023distributed} X. Wang, A. R. Teel, X. Sun, K. Liu, G. Shao, ``A distributed robust two-time-scale switched algorithm for constrained aggregative games", vol. 68, no. 11, pp. 6525--6540, Nov. 2023.
	 
	 
	 
	 
	 
	 
	 
	 
	 
	 
	 
	 
	 
	 
	 
	 
	 
	 
	 
	 \bibitem{ye2017distributed}  M. Ye and G. Hu, ``Distributed Nash equilibrium seeking by a consensus based approach", \emph{IEEE Trans. Autom. Control}, vol. 62, no. 9, pp. 4811–4818, Sep. 2017.
	 
	 
	 \bibitem{ye2017switching}  M. Ye and G. Hu, ``Distributed Nash equilibrium seeking in multiagent games under switching communication topologies", \emph{IEEE Trans. Cybern.}, vol. 48, no. 11, pp. 3208–3217, Nov. 2018.	
	 
	 
	 
	 
	 
	 
	 
	 
	 
	 
	 
	 
	 
	 
	 
	 
	 
	 \bibitem{zhu2020distributed} Y. Zhu, W. Yu, G. Wen, G. Chen, ``Distributed Nash equilibrium seeking in an aggregative game on a directed graph", \emph{IEEE Trans. Autom. Control}, vol. 66, no. 6, pp. 2746--2753, Jun. 2021.
	 
	 
	 \bibitem{zhu2022asynchronous} R. Zhu, J. Zhang, K. You, T. Ba{\c{s}}ar, ``Asynchronous networked aggregative games", \emph{Automatica}, vol. 136, 2022, Art. no. 110054.
	 
	 
	 \bibitem{zou2021continuous} Y. Zou, B. Huang, Z. Meng, Z., and W. Ren,  ``Continuous-time distributed Nash equilibrium seeking algorithms for non-cooperative constrained games," \emph{Automatica},  vol. 127, May 2021, Art. no. 109535.
	 
	 
	 
	 
	 
	 \end{thebibliography}

	%
	%
	%
	%
	%
	
\end{document}